\documentclass[letterpaper, 10 pt, conference]{ieeeconf}  

\IEEEoverridecommandlockouts                              

\usepackage{amsmath,mathrsfs} 
\usepackage{amssymb}  
\usepackage{graphicx}
\usepackage{cite}
\usepackage{url}
\usepackage{calligra}
\usepackage{subfigure}
\usepackage{float}
\newtheorem{lemma}{Lemma}
\newtheorem{theorem}{Theorem}
\newtheorem{proposition}{Proposition}
\newtheorem{assumption}{Assumption}

\newtheorem{remark}{Remark}
\newenvironment{IEEEproof}{{\bf Proof.}}{ }
 \usepackage{xcolor}
\renewcommand{\baselinestretch}{0.8367}
\title{\LARGE \bf
Towards Co-Designed Event-Triggered Extremum Seeking
}

\author{Roberto Luo$^{1}$, Pedro Henrique Silva Coutinho$^{1}$, Victor Hugo Pereira Rodrigues$^{1}$,\\ Tiago Roux Oliveira$^{1}$,  
and Miroslav Krstic$^{2}$
\thanks{$^{1}$ R. Luo, P. H. S. Coutinho, V. H. P. Rodrigues, and T. R. Oliveira are with the Department of Electronics and Telecommunication Engineering (DETEL), State University of Rio de Janeiro (UERJ), Brazil. 
        Email: {\tt robertoluoo@gmail.com; [phcoutinho, victor.rodrigues]@eng.uerj.br, tiagoroux@uerj.br}}
        \thanks{$^{2}$M. Krstic is with the Department of Mechanical and Aerospace
Engineering (MAE), University of California at San Diego (UCSD),  USA.
        Email: {\tt  mkrstic@ucsd.edu}}
}

\begin{document}

\maketitle
\thispagestyle{empty}
\pagestyle{empty}

\begin{abstract}
This paper studies event-triggered gradient-based multivariable extremum seeking for nonlinear maps with polytopic Hessian uncertainty. Unlike existing event-triggered extremum-seeking methods, which first fix the controller (typically diagonal) and then design the triggering mechanism, the proposed approach jointly synthesizes the controller and the triggering mechanism through a co-design framework that admits both diagonal and full controller gain matrices. The co-design problem is formulated as a convex optimization problem with linear matrix inequality constraints. Its solution guarantees exponential convergence of the average closed-loop system with a prescribed decay rate while maximizing the admissible triggering threshold to reduce communication. Lyapunov and averaging analyses establish exponential stability of the event-triggered system, and Zeno-freeness is proved to guarantee implementability. Numerical results illustrate that diagonal gains cannot achieve the same triggering thresholds and decay rates as the full controller gain matrices, highlighting the benefits of exploiting Hessian coupling information. 
\end{abstract}

\section{INTRODUCTION}
Extremum seeking (ES) is a model-free optimization technique that drives a system output toward its extremum. Through periodic perturbation signals, ES estimates the gradient of an unknown objective function, enabling real-time optimization without requiring an explicit model of the function \cite{KW:2000}.

Since its introduction, ES control has been extensively studied from both theoretical and practical perspectives \cite{AS:2024}. Applications include online tuning of PID controllers in neuromuscular electrical stimulation systems 
\cite{POPF:2020} and traffic congestion control \cite{SHPMG:2023}. 
On the theoretical side, recent advances include ES with uncertainty estimation \cite{GUAY:2021}, continuous-time and sampled-data ES with delays \cite{OTK:2017,ZFO:2023}, ES for PDE systems \cite{TRoux:2022}, and 
the extension to Nash equilibrium seeking in noncooperative games with heterogeneous agent dynamics \cite{CSm:2026}.

In parallel, event-triggered control (ETC) has emerged as an effective strategy to reduce resource usage in feedback systems. Unlike traditional periodic control, ETC updates the control input only when a triggering condition is satisfied, reducing computation, communication, and energy consumption \cite{ZHGDDYP:2020}. Early studies in \cite{AB:1999} compared periodic and event-triggered implementations, showing improved computational efficiency with event-based strategies. This framework was further formalized in \cite{T:2007}.

In this context, the combination of ES and event-triggered control has shown promising results for the optimization of scalar and multivariable static maps \cite{VHPR:2025,ROKT:2026}. However, most existing schemes adopt an emulation-based design, in which the controller gain is designed first, and the event-triggering mechanism (ETM) is synthesized afterward. Although this approach simplifies the design process, it restricts the achievable closed-loop performance because the controller and the triggering law cannot be optimized simultaneously. Moreover, prescribing a diagonal controller gain restricts the available design space because it cannot exploit the coupling between optimization variables induced by the Hessian. Likewise, most existing event-triggered extremum-seeking schemes adopt an emulation-based design, in which the controller gain is fixed a priori and only the triggering mechanism is optimized. Consequently, the controller and the triggering law cannot be jointly optimized, often leading to conservative triggering conditions and unnecessary communication. 

\textcolor{black}{This paper addresses these limitations through a unified \emph{co-design} framework that jointly synthesizes the controller gain and the ETM \cite{APDNH:2018,CPBPP:2026}. By exploiting prior information about the objective-function curvature, the proposed framework accommodates both diagonal and full-matrix controller gains, allowing the co-design to exploit Hessian-induced cross-couplings that are unavailable to diagonal designs. Unlike diagonal gains, which treat each optimization coordinate independently, full-matrix gains allow larger admissible transmission errors and consequently fewer triggering events. As illustrated in Section~\ref{sec:sim_siso}, the resulting full-matrix co-design achieves lower objective values and fewer triggering events than both the diagonal co-design and conventional emulation-based designs.} 
\textcolor{black}{From this perspective, prior information about the objective-function curvature is not only valuable for guaranteeing stability, but also for reducing communication by enlarging the feasible joint design space of the controller gain and triggering mechanism.}

\emph{Notation:} Throughout the paper, $\|\cdot\|$, $|\cdot|$, and $\operatorname{tr}(\cdot)$ denote the Euclidean norm and its induced matrix norm, the absolute value, and the trace operator, respectively. For a positive definite matrix, $\lambda_{\min}(\cdot)$ and $\lambda_{\max}(\cdot)$ denote its minimum and maximum eigenvalues, respectively. Moreover, $\mathcal{O}(\cdot)$ denotes the standard big-$O$ notation; namely, $\delta_{1}(\varepsilon)=\mathcal{O}(\delta_{2}(\varepsilon))$ if there exist positive constants $k$ and $c$ such that $|\delta_{1}(\varepsilon)| \le k |\delta_{2}(\varepsilon)|$, for all $|\varepsilon|<c$. For a symmetric matrix $X$, $X \succ 0 (\prec 0)$ denotes that $X$ is a positive (negative) definite matrix.

\section{Problem Formulation} \label{sec:prblFrm_siso}

Fig.~\ref{fig:BD_SET_GradientES_SISO} shows the setup of the event-triggered ES control system addressed in this work, whose convergence properties are strongly influenced by the curvature of the objective function $Q(\cdot)$.
\begin{figure}[ht!]
\centering
\includegraphics[width=0.75\columnwidth]{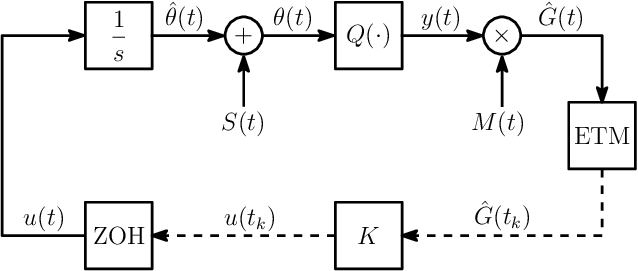}
\caption{Block diagram of the event-triggered gradient-based extremum seeking control.}
\label{fig:BD_SET_GradientES_SISO}
\end{figure}
\subsection{Static Map, Probing and Demodulation Signals}
A multivariable quadratic static map is considered
\begin{align}
\!\!\!y(t)&=Q(\theta(t)) = Q^{\ast}+\frac{1}{2}(\theta(t)-\theta^{\ast})^{\top}H^{\ast}(\theta(t)-\theta^{\ast})\,, \label{eq:y_v2}
\end{align}
where $Q^{\ast} \in \mathbb{R}$ is the unknown optimal value, $H^{\ast} = H^{\ast \top} \in \mathbb{R}^{n \times n}$ is the Hessian matrix, $\theta^{\ast} \in \mathbb{R}^{n}$ is the optimizer, and $\theta(t) \in \mathbb{R}^{n}$ is the input vector. Since $H^{\ast}$ is unknown, it may be either positive or negative definite depending on whether the problem is a minimization or maximization task. This sign is reflected in the choice of the controller gain $K$: \textcolor{black}{for maximization, $K\succ0$ when $H^{\ast}\prec0$, while for minimization, $K\prec0$ when $H^{\ast}\succ0$.} The probing-perturbation signal $S(t)$, 
and the demodulation signal $M(t)$ are defined as in \cite{GKN:2012}: 
\begin{align}
S(t)&= \left[a_{1}\sin\left(\omega_1 t\right),\ldots,a_{i}\sin\left(\omega_i t\right)\right]^{\top}\!\!, \label{eq:S_v1} \\
M(t)&=2\left[\frac{1}{a_{1}}\sin\left(\omega_1 t\right),\ldots,\frac{1}{a_{i}}\sin\left(\omega_i t\right)\right]^{\top}\!\!, \label{eq:M_v1}
\end{align}
where $a_{i}$ denote the perturbation amplitudes and the perturbation frequencies $\omega_{i}$'s are chosen as,
\begin{align}
\omega_{i}=\omega_{i}'\omega \,, \quad i \in \left\{1,\ldots\,,n\right\}\,, \label{eq:omegai_event}
\end{align}
satisfying the following assumption.
\begin{assumption}\label{hipotese_1}
The perturbation frequencies $\omega_{i}$ satisfy
\begin{align}
\omega'_{i} 	\notin \left\{\omega'_{j}\,,~\frac{1}{2}(\omega'_{j}+\omega'_{k})\,,~\omega'_{j}+2\omega'_{k}\,,~\omega'_{k}\pm\omega'_{l}\right\}\,, \label{eq:omega_iNotIn}
\end{align}
for all $i$, $j$, $k$ and $l$, where $\omega$ is a positive constant and $\omega_{i}'$ is a rational number.
\end{assumption}




The parameters of the nonlinear static map \eqref{eq:y_v2} are unknown, while the output $y(t)$ is measurable. The input $\theta(t)$ is constructed from the estimate $\hat{\theta}(t)\in\mathbb{R}^n$ of $\theta^\ast$ and a periodic perturbation $S(t)$, as follows
\begin{align}
\theta(t)&=\hat{\theta}(t)+S(t)\,. \label{eq:theta_event_siso}
\end{align}
In general, stability analyses of ES control systems assume that the sign of the Hessian matrix $H$ is known, allowing the gain matrix $K$ to be selected with the opposite sign. In this work, a more general uncertainty description is adopted by assuming that the unknown Hessian belongs to a polytopic domain:
\begin{assumption}\label{hipotese_2}
The unknown Hessian matrix $H$ belongs to the following polytopic domain:
\begin{align}
\mathcal{H} \in \mathrm{co}\{H_{1},\ldots,H_{N}\},
\end{align}
where $N$ is the number of polytope vertices and $H_{i}$, $i = 1,\ldots,N$, are known matrices.
\end{assumption}
Regarding Assumption~\ref{hipotese_2}, any Hessian matrix $H^{\ast} \in \mathcal{H}$ admits the parameterization
\begin{align}
H^* = H(\alpha) = \sum_{i = 1}^{N} \alpha_{i}H_{i}\,, \label{eq:conv_h}
\end{align}
where $\alpha = (\alpha_1, \ldots, \alpha_N)$ is a vector of fixed but unknown parameters belonging to the unit simplex.
\begin{align}
\Xi = \left\{ \alpha \in \mathbb{R}^N {:} \sum_{i=1}^{N} \alpha_i = 1,\; \alpha_i \ge 0,\; i = 1, \ldots, N \right\}.
\end{align}
The Hessian matrix polytope can be obtained using different uncertainty representations. For example, if it is assumed that $H^\ast = (1 + \sigma)H_0$,
where $H_0$ is a known nominal Hessian estimate and $\sigma$ is an uncertain parameter such that $|\sigma| \le \bar{\sigma}$, this parameterization can be incorporated into a two-vertex polytope with $H_1 = (1 - \bar{\sigma})H_0$ and $H_2 = (1 + \bar{\sigma})H_0$.

\subsection{Estimation Error and Gradient Estimate}

Consider the vector $\hat{\theta}(t) \in \mathbb{R}^{n}$ as an estimate of $\theta^{\ast}$, and define the \textit{estimation error} as
\begin{align}
\tilde{\theta}(t):=\hat{\theta}(t)-\theta^{\ast}\,. \label{eq:thetaTilde}
\end{align}
Moreover, the \textit{gradient estimate} is given by
\begin{align}
\hat{G}(t) = M(t)y(t) \in \mathbb{R}^{n} \label{eq:grad_est}\,,
\end{align}
\noindent using \eqref{eq:thetaTilde} to eliminate $\hat{\theta}(t)$ from \eqref{eq:theta_event_siso}, and substituting the resulting expression into \eqref{eq:y_v2}, yields
\begin{align}
y(t) = Q^{\ast}+\frac{1}{2}(\tilde{\theta}(t)+S(t))^{\top}H^{\ast}(\tilde{\theta}(t)+S(t))\,.
\label{eq:y_v4}
\end{align}
Using \eqref{eq:y_v4} in \eqref{eq:grad_est}, the gradient estimate is obtained as
\begin{align}
\hat{G}(t)& {=}M(t)\frac{1}{2}\tilde{\theta}^{\top}(t)H^{\ast}\tilde{\theta}(t) {+}\left(H^{\ast} {+}\Delta \! \mathscr{H}^{\ast}\!\!(t)\right)\tilde{\theta}(t) {+}\Delta(t)\,,\label{eq:hatG_20240302_2}
\end{align}
where  $\frac{d\Delta \! \mathscr{H}^{\ast}}{dt}(t)$ and $\Delta(t)$ consist of sinusoidal terms. Since the term $\tilde{\theta}^{\top}(t)H^{\ast}\tilde{\theta}(t)$ is quadratic in $\tilde{\theta}(t)$ and can therefore be neglected in a local analysis around the optimizer \cite{AK:2003}, the gradient estimate (\ref{eq:hatG_20240302_2}) can be rewritten as
\begin{align}
\hat{G}(t)&=\left(H^{\ast}+\Delta \! \mathscr{H}^{\ast}(t)\right)\tilde{\theta}(t)+\Delta(t)\,. \label{eq:hatG_20240302_3}
\end{align}

On the other hand, from the time derivative of (\ref{eq:thetaTilde}) and the block diagram shown in Fig.~\ref{fig:BD_SET_GradientES_SISO}, the dynamics governing $\hat{\theta}(t)$ and consequently $\tilde{\theta}(t)$, are given by
\begin{align}
\frac{d\tilde{\theta}}{dt}(t)&=\frac{d\hat{\theta}}{dt}(t)=u(t) \label{eq:u_continuous}\,,
\end{align}
with $u(t)= [u_{1}(t),u_{2}(t),\ldots,u_{n}(t)]^{\top} \in \mathbb{R}^{n}$. Furthermore, using (\ref{eq:u_continuous}), the time derivative of (\ref{eq:hatG_20240302_3}) is given by
\begin{align}
\frac{d\hat{G}}{dt}(t)& {=}\left(H^{\ast} {+}\Delta \mathscr{H}^{\ast}(t)\right)u(t) {+}\frac{d\Delta  \mathscr{H}^{\ast}}{dt}(t)\tilde{\theta}(t) {+}\frac{d \Delta}{dt}(t)\,. \label{eq:dhatgdt}
\end{align}
The derivative terms $\frac{d\Delta \! \mathscr{H}^{\ast}}{dt}$ and $\frac{d\Delta}{dt}$ in \eqref{eq:dhatgdt} are bounded periodic signals with zero mean over one excitation period. Their contribution therefore vanishes in the average model obtained under the standard frozen-state averaging argument.

\subsection{Event-Triggered Extremum Seeking Control}
Let $t_{k}$ denote an unbounded and monotonically increasing sequence of time instants
\begin{align}
0{=}t_{0}<t_{1}<\ldots<t_{k}<\ldots\,, \quad k \in \mathbb{N}\,, \lim_{k \to \infty} t_{k}{=}\infty \,, \label{eq:s_k_event_siso}
\end{align}
with aperiodic sampling intervals $\tau_{k}=t_{k+1}-t_{k}>0$.
A continuous measurement of the system output is assumed, while an event-triggered approach is employed. The actuator converts the discrete-time control input $u_k=U(t_{k})$ into a piecewise-continuous input $u(t)$ with a zero-order hold. Then, the control law is given by
\begin{align}
u(t)\!=\!K\hat{G}(t_{k})\,,
\quad 
\forall t \!\in\! \lbrack t_{k}\,, t_{k+1}\phantom{(}\!\!) \,, \quad k\!\in\! \mathbb{N}  \,. \label{eq:U_event_siso}
\end{align}
As control updates occur at discrete time instants rather than continuously, the following transmission error is induced:
\begin{align}
e(t):=\hat{G}(t_{k})-\hat{G}(t) \,, \quad \forall t \in \lbrack t_{k}\,, t_{k+1}\phantom{(}\!\!) \,, \quad k\in \mathbb{N} \,. \label{eq:e_event_siso}
\end{align}
Therefore, the control law (\ref{eq:U_event_siso}) can be rewritten in terms of the error (\ref{eq:e_event_siso}) as
\begin{align}
u(t)=K\hat{G}(t)+Ke(t)\,, ~ \forall t \in \lbrack t_{k}\,, t_{k+1}\phantom{(}\!\!) \,, ~ k\in \mathbb{N}  \,. \label{eq:U_event_v2_siso}
\end{align}
By substituting \eqref{eq:U_event_v2_siso} into \eqref{eq:u_continuous} and \eqref{eq:dhatgdt}, the closed-loop dynamics of $\hat{G}(t)$ and $\tilde{\theta}(t)$ are obtained as functions of the transmission error vector $e(t)$
\begin{align}
&\frac{d\hat{G}}{dt}(t) {=}\left(H^{\ast} {+}\Delta \! \mathscr{H}^{\ast}\!(t)\right)K\hat{G}(t) +\frac{d\Delta \! \mathscr{H}^{\ast}}{dt}\!(t)\tilde{\theta}(t) \nonumber\\
& \quad ~~~~~~~~~ + \left(H^{\ast} {+}\Delta \! \mathscr{H}^{\ast}\!(t)\right)Ke(t) {+}\frac{d \Delta}{dt}(t)\,, \label{eq:dotHatGav_event_3_siso} \\
&\frac{d\tilde{\theta}}{dt}(t) {=} K\left(H^{\ast}{+}\Delta \! \mathscr{H}^{\ast}(t)\right)\tilde{\theta}(t){+}K\Delta(t){+}Ke(t). 
\label{eq:dotTildeTheta_2_event_siso}
\end{align}
In this work, the transmission sequence is generated by the following ETM:
\begin{align}
    \hspace{-0.5cm}t_{k+1}&{=}\inf\left\{t\in\mathbb{R}^{+}: t>t_{k} \wedge  \Gamma (\hat{G}(t), e(t)) < 0 \right\},
    \label{eq:tk+1_event} 
\end{align}
where the trigger function is
\begin{align}
\Gamma(\hat{G}(t),e(t)) = \hat{G}^{\top}(t)Q_{\rm{G}}\hat{G}(t) -e^{\top}(t)Q_{\rm{e}}e(t).   \label{eq:gamma}
\end{align}

\subsection{Average Closed-Loop~System}
For the stability analysis, introduce the scaled time
$\bar t=\omega t$, where $\omega=2\pi/T$ and $T$
denotes the common period of the perturbation signals \cite{GKN:2012}: 
\begin{align}
T = 2\pi \times \mathrm{LCM}\left(\tfrac{1}{\omega_{1}}, \ldots, \tfrac{1}{\omega_{n}} \right)\,.
\label{eq:omega_event_1_siso}
\end{align}

Defining the augmented state $y=[\hat G^\top,\tilde\theta^\top,e^\top]^\top$, the closed-loop dynamics in \eqref{eq:dotHatGav_event_3_siso} and \eqref{eq:dotTildeTheta_2_event_siso} can be written in the following form
\begin{align}
\frac{dy}{d\bar t}(\bar{t})
&\in
\frac1\omega F\!\left(\bar t,y\right), \quad \bar{t} \neq \bar{t}_k(y),\label{eq:dotX_event_siso} \\
\Delta y|_{\bar{t} = \bar{t}_k(y)} & \in \frac1\omega \mathcal{I}_k(y),
\end{align}
where $\mathcal{F} = [\mathcal{F}_1 \; \mathcal{F}_2 \; \mathcal{F}_3]^\top$, $\mathcal{I}_k = [0 \; 0 \; - e^\top]^\top$, $\mathcal{F}_{1}$ and $\mathcal{F}_{2}$ are obtained directly from \eqref{eq:dotHatGav_event_3_siso} and \eqref{eq:dotTildeTheta_2_event_siso}, respectively, $\mathcal{F}_3 = -\mathcal{F}_1$. All these functions are $T$-periodic in $\bar t$. Due to the discontinuous nature of the proposed control strategy, the averaging method for discontinuous systems, as developed in \cite{P:1979}, is employed.
The augmented-state system $y$ is characterized by the presence of the small parameter $1/\omega$ and a $T$-periodic function $\mathcal{F}\left(\bar{t},y\right)$ with respect to $\bar{t}$. Therefore, the averaging method is employed to analyze the stability of $\mathcal{F}\left(\bar{t},y,\frac{1}{\omega}\right)$ in the limit $\displaystyle \lim_{\omega\to\infty}\frac{1}{\omega}=0$, as described in \cite{P:1979,KPS:2012,VHPR:2025,ROKT:2026}. Consequently, the associated average system~is
\begin{align}
\frac{dy_{\rm av}}{d\bar t}
=
\frac1\omega
\mathcal{F}_{\rm av}(y_{\rm av}), \label{eq:dotXav_event_1_siso}
\end{align}
with
\begin{align}
\mathcal{F}_{\rm av}(y_{\rm av})
=
\lim_{T \to \infty} \frac1T \Bigg(
\int_{\bar{t}}^{\bar{t}+T}
\!\!\!\!\mathcal{F}(\tau,y_{\rm av})d\tau
+ \!\!\!\!
 \sum_{\bar{t} {\leq} \bar{t}_k(y) {<} \bar{t} + T}\!\!\!\! \mathcal{I}_k(y)
\Bigg).
\end{align}
This yields the average closed-loop dynamics together with the natural average counterpart of the original event-triggered implementation, given by
\begin{align}
\frac{d\hat{G}_{\text{av}}}{d\bar{t}}(\bar{t})&=\frac{1}{\omega}H^{\ast}K\hat{G}_{\rm{av}}(\bar{t})+\frac{1}{\omega}H^{\ast}Ke_{\rm{av}}(\bar{t})\,, \label{eq:dotHatGav_event_1_siso} \\
\frac{d\tilde{\theta}_{\text{av}}}{d\bar{t}}(\bar{t})&=\frac{1 }{\omega}H^{\ast}K\tilde{\theta}_{\rm{av}}(\bar{t})+\frac{1}{\omega}Ke_{\rm{av}}(\bar{t})\,, \label{eq:dotTildeThetaAv_event_1_siso}
\end{align}
where the average update error is given by
\begin{align}
e_{\text{av}}(\bar{t})&=\hat{G}_{\text{av}}(\bar{t}_{k})-\hat{G}_{\text{av}}(\bar{t})\,. \label{eq:Eav_event_1_siso} 
\end{align}
Note that the impulse average does not contribute to the average dynamics because the impulse map resets the error to zero, and over a period, the average error tends to zero.
The averaging method allows one to characterize the behavior of the nonautonomous system \eqref{eq:dotX_event_siso} through its autonomous average counterpart \eqref{eq:dotXav_event_1_siso}.
Treating the slow variables $\hat{G}(\bar{t})$, $\tilde{\theta}(\bar{t})$, and the transmission error $e(\bar{t})$ as frozen over one excitation period, the oscillatory terms in \eqref{eq:dotHatGav_event_3_siso} and \eqref{eq:dotTildeTheta_2_event_siso} have zero mean under Assumption~\ref{hipotese_1}.

Accordingly, the average triggering law is obtained by replacing
$(\hat{G},e)$ in the original triggering condition with
$(\hat{G}_{\rm av},e_{\rm av})$. Moreover, from (\ref{eq:hatG_20240302_3}),
\begin{align}
\hat{G}_{\text{av}}(\bar{t})= H^{\ast}\tilde{\theta}_{\text{av}}(\bar{t})\,. \label{eq:hatGav_event_1_siso}
\end{align}
Since $H^{*}$ is nonsingular, $\hat{G}_{\rm av}(\bar{t})$ and $\tilde{\theta}(\bar{t})$ are in one-to-one correspondence.
Hence, the following ETM can be established for the average system:
\begin{align}
    \hspace{-0.5cm}\bar{t}_{k+1}&{=}\inf\left\{\bar{t} \in\mathbb{R}^{+}: \bar{t}\!>\!\bar{t}_{k} \wedge  \Gamma (\hat{G}_{\rm{av}},e_{\rm{av}}) <0 \right\}\,, \label{eq:tk+1_event_av_siso}
\end{align}
with 
\begin{align}
\Gamma(\hat{G}_{\rm{av}},e_{\rm{av}}) = \hat{G}^{\top}_{\rm{av}}(\bar{t})Q_{\rm{G}}\hat{G}_{\rm{av}}(\bar{t}) -e^{\top}_{\rm{av}}(\bar{t})Q_{\rm{e}}e_{\rm{av}}(\bar{t}).  \label{eq:gamma_event_1_siso}
\end{align}
Moreover, the average event-triggered control law is
\begin{align}
    u^{\rm{av}}(\bar{t})\!=\!K\hat{G}_{\rm{av}}(\bar{t}_{k}) \,, \quad \forall \bar{t} \!\in\! \lbrack \bar{t}_{k}\,, \bar{t}_{k+1}\phantom{(}\!\!)\,, \quad k\!\in\!\mathbb{N}\,. \label{eq:U_MD3}
\end{align}


\section{Co-Design Condition and Stability Analysis} \label{sec:stblt_siso}

This section presents the main theoretical results of this paper. First, we present a co-design condition for the joint synthesis of the controller gain and triggering mechanism based on the average system. Then, we prove the stability analysis of the original system using the averaging method~\cite{KPS:2012} and the Zeno-freeness property. Finally, an optimization problem is provided to reduce the number of transmissions.

\subsection{Co-Design Condition}
The following lemma provides a condition to simultaneously design a gain matrix $K$ and the triggering matrices $Q_{\rm G}$ and $Q_{\rm e}$ such that the origin of the average closed-loop system is exponentially stable.
\begin{lemma} \label{lema1}
    Consider the average system \eqref{eq:dotHatGav_event_1_siso} and assume that Assumptions~\ref{hipotese_1} and \ref{hipotese_2} hold. For a given scalar $\eta > 0$, if there exist positive definite matrices $W, ~\tilde{Q}_G, ~\tilde{Q}_e \in \mathbb{R}^{n\times n}$ and a matrix $Z \in \mathbb{R}^{n \times n}$ such that the following LMI holds:
    \begin{align}
    {
    \begin{bmatrix}
    Z^\top H_i + H_i Z + 2\eta W
    & H_i Z
    & W \\
    Z^\top H_i
    & -\tilde Q_{\rm{e}}
    & 0 \\
    W
    & 0
    & -\tilde Q_{\rm{G}}
    \end{bmatrix}
    }
    \!\prec 0, \, i \!=\! 1,\ldots,N\,.
    \label{lema2:condition}
    \end{align}
Then, the average system is exponentially stable with decay rate $\eta > 0$, that is: 
\begin{align}
     \|\hat{G}_{\mathrm{av}}(t)\| \leq \kappa e^{-\eta t} \|\hat{G}_{\rm{av}}(0)\|, \label{eq:ghat_solution}
\end{align}
with $\kappa = \sqrt{\lambda_{\max}(P)/\lambda_{\min}(P)}$, $P = W^{-1}$, $K = ZW^{-1}$, $Q_{\rm G} = \tilde{Q}_{\rm G}^{-1}$, and $Q_{\rm e} = W^{-1}\tilde{Q}_{\rm e}W^{-1}$.
\end{lemma}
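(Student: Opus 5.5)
We work with the average dynamics \eqref{eq:dotHatGav_event_1_siso} in the time scale where the factor $1/\omega$ is absorbed, i.e., $\dot{\hat G}_{\rm av}=H^\ast K(\hat G_{\rm av}+e_{\rm av})$. We use the quadratic Lyapunov function $V(\hat G_{\rm av})=\hat G_{\rm av}^\top P\hat G_{\rm av}$ with $P=W^{-1}\succ0$. The aim is to show $\dot V\le -2\eta V$ along trajectories whenever the average triggering rule \eqref{eq:tk+1_event_av_siso} keeps $\Gamma(\hat G_{\rm av},e_{\rm av})\ge 0$, which holds between events by construction. The standard comparison argument then gives $V(t)\le e^{-2\eta t}V(0)$. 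Sandwiching with $\lambda_{\min}(P)\|\hat G_{\rm av}\|^2\le V\le\lambda_{\max}(P)\|\hat G_{\rm av}\|^2$ yields \eqref{eq:ghat_solution} with the stated $\kappa$. Since $e_{\rm av}$ is reset at events and $\hat G_{\rm av}$ is continuous, $V$ is continuous, so the bound propagates across events.

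\textbf{Step 1: the target matrix inequality in the original variables.} By the S-procedure with unit multiplier on $\Gamma\ge0$, it suffices to have
\begin{align*}
\dot V+2\eta V+\Gamma \le 0
\end{align*}
for all $(\hat G_{\rm av},e_{\rm av})$, not both zero. Written in the vector $\xi=[\hat G_{\rm av}^\top\; e_{\rm av}^\top]^\top$, this reads $\xi^\top\Phi\,\xi\le0$, where
\begin{align*}
\Phi=\begin{bmatrix} PH^\ast K+K^\top H^\ast P+2\eta P+Q_{\rm G} & PH^\ast K\\ K^\top H^\ast P & -Q_{\rm e}\end{bmatrix}.
\end{align*}
The goal is therefore $\Phi\prec0$ for $H^\ast=H(\alpha)$ and every $\alpha\in\Xi$. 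Here we have used the symmetry $H^{\ast\top}=H^\ast$.

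\textbf{Step 2: linearization by congruence and Schur complement.} Apply the congruence $\mathrm{diag}(W,W)$ with $W=P^{-1}$, and substitute $Z=KW$. The $(1,1)$ block becomes
\begin{align*}
H^\ast Z+Z^\top H^\ast+2\eta W+WQ_{\rm G}W,
\end{align*}
the off-diagonal block becomes $H^\ast Z$, and the $(2,2)$ block becomes $-WQ_{\rm e}W=-\tilde Q_{\rm e}$. The remaining nonlinear term is $WQ_{\rm G}W=W\tilde Q_{\rm G}^{-1}W$. A Schur complement on the third row and column of \eqref{lema2:condition} shows that the LMI with $H_i$ in place of $H^\ast$ is equivalent to exactly this transformed inequality.

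\textbf{Step 3: from the vertices to the whole polytope.} The matrix in \eqref{lema2:condition} is affine in $H_i$, and the decision variables $W,Z,\tilde Q_{\rm G},\tilde Q_{\rm e}$ are common to all vertices. Multiplying the $i$-th inequality by $\alpha_i\ge0$ and summing, using $\sum_i\alpha_i=1$, gives the same LMI with $H(\alpha)$ in place of $H_i$. This holds for every $\alpha\in\Xi$, hence for the unknown $H^\ast$.

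The main obstacle is the bookkeeping of the transformation in Step 2. One must verify that the congruence factor acting on $e_{\rm av}$ is consistent with $Q_{\rm e}=W^{-1}\tilde Q_{\rm e}W^{-1}$ and with $K=ZW^{-1}$. One must also check that the resulting gains and triggering matrices are well defined: $W,\tilde Q_{\rm G}\succ0$ ensures $P$ and $Q_{\rm G}$ exist, and $\tilde Q_{\rm e}\succ0$ ensures $Q_{\rm e}\succ0$. A secondary point is that strict negativity only gives $\dot V\le-2\eta V$ on intervals where $\Gamma\ge0$, so this inequality must be combined with the average ETM definition, which guarantees $\Gamma\ge0$ on each interval $[\bar t_k,\bar t_{k+1})$.
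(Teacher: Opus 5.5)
Your proof is correct and follows essentially the same route as the paper. Both use the Lyapunov function $V=\hat G_{\rm av}^\top P\hat G_{\rm av}$, link the LMI to the matrix $\Phi$ through the congruence with $\mathrm{diag}(W^{-1},W^{-1},I)$ and a Schur complement on the $\tilde Q_{\rm G}$ block (you simply run this in the reverse direction), use $\Gamma\ge 0$ between events with a unit multiplier, and conclude with the comparison lemma and Rayleigh--Ritz. The one addition is your explicit convex-combination argument extending the vertex LMIs to $H(\alpha)$, which the paper compresses into ``provided that $H^{\ast}\in\mathcal H$''.
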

\begin{IEEEproof}
Assume that condition (\ref{lema2:condition}) holds. 
By performing a congruence transformation on \eqref{lema2:condition} with $\mbox{diag}(W^{-1}, W^{-1}, I)$ and defining $P = W^{-1}$, $K = ZW^{-1}$, $Q_{\rm G} = \tilde{Q}_{\rm G}^{-1}$, and $Q_{\rm e} = P^{-\top}\tilde{Q}_{\rm e}P^{-1}$, and then applying the Schur complement,
one obtains
\begin{align}
    \begin{bmatrix}
    K^\top H^{\ast}P + PH^{\ast}K + 2\eta P + Q_{\rm{G}}
    & PH^{\ast \top} K\\
    K^\top H^{\ast}P
    & -Q_{\rm{e}}
    \end{bmatrix} \prec 0 \,,
    \label{lema2:eq2}
\end{align}
provided that $H^{\ast} \in \mathcal{H}$. 
Premultiplying the matrix inequality in \eqref{lema2:eq2} by $[\hat{G}_{\rm av}^{\top} ~ e_{\rm av}^{\top}]$ and postmultiplying it by its transpose yields
\begin{align} 
\hat{G}_{\rm{av}}^\top (K^\top H^{\ast} P +  \nonumber
& P H^{*} K + 2\eta P + Q_{\rm{G}})\hat{G}_{\rm{av}} \nonumber \\ 
& + 2\hat{G}_{\rm{av}}^\top P H^{*} K e_{\rm{av}} - e_{\rm{av}}^\top Q_{\rm{e}} e_{\rm{av}} < 0\,.
\end{align}
Thus, it follows from the ETM~\eqref{eq:gamma_event_1_siso}--\eqref{eq:tk+1_event_av_siso} that
\begin{align}
    \dot{V}(\hat{G}_{\rm av}) \leq - 2\eta V(\hat{G}_{\rm av}) < 0\,, \label{eq:v_gav}
\end{align}
where $V(\hat{G}_{\rm av}) =  \hat{G}_{\rm{av}}^\top P \hat{G}_{\rm{av}}$
is a Lyapunov function that certifies the exponential stability of the origin of the average closed-loop system. Applying the Comparison Lemma \cite{K:2002} to (\ref{eq:v_gav}) yields $V(\hat{G}_{\rm av}(t)) \leq e^{-2 \eta t}V(\hat{G}_{\rm av}(0))$.
By means of the Rayleigh-Ritz inequality \cite{K:2002},
$\lambda_{\min}(P)\,\|\hat{G}_{\rm av}\|^{2} \leq\ V(\hat{G}_{\rm av})\leq\
\lambda_{\max}(P)\,\|\hat{G}_{\rm av}\|^{2}$,
it can be shown that~\eqref{eq:ghat_solution} holds with 
where $\kappa = \sqrt{\lambda_{\max}(P)/\lambda_{\min}(P)}$. Hence, the origin of the system is exponentially stable.
\hfill $\blacksquare$
\end{IEEEproof}

\begin{remark}
    \label{rem:diag}
    The co-design approach jointly synthesizes a controller gain $K$ with a full structure and the triggering matrices $Q_G$ and $Q_e$ while enforcing a prescribed exponential decay rate $\eta$. Note that a $K$ with diagonal structure can be enforced by solving the co-design condition of Lemma~\ref{lema1} with diagonal matrices $Z$ and $W$. 
\end{remark}

\subsection{Stability and Convergence Analysis}
\begin{theorem} \label{thm:NETESC_1_siso}
Consider the average closed-loop dynamics given by \eqref{eq:dotHatGav_event_1_siso}--\eqref{eq:dotTildeThetaAv_event_1_siso}, together with the average triggering mechanism \eqref{eq:tk+1_event_av_siso}, updated according to the control law \eqref{eq:U_MD3}. Under the conditions of Lemma~\ref{lema1}, with $\Gamma(\hat{G}_{\rm av},e_{\rm av})$ defined\linebreak in (\ref{eq:gamma_event_1_siso}), it follows that, for sufficiently large $\omega\!>\!0$ and sufficiently small initial condition $\theta(0)$, the equilibrium $(\hat{G}_{\rm av},\tilde{\theta}_{\rm av})\!=\!(0,0)$ is locally exponentially stable, and the norms of the input-output signals 
satisfy
\begin{align}
\|\theta(t) - \theta^{\ast}\| & \leq \kappa_{\theta} e^{-\eta t}\|\theta(0) - \theta^{\ast}\| + \mathcal{O}\left(a + \frac{1}{\omega}\right)\,, \label{eq:normTheta_thm1_siso} \\ 
|y(t) - Q^{\ast}| &\leq Me^{-\eta t} + 2\kappa_{\theta} e^{-\eta t}\|\theta(0) - \theta^{\ast}\|\mathcal{O}\left(a + \frac{1}{\omega}\right) \nonumber \\
&+ \mathcal{O}\left(a^{2} + \frac{1}{\omega^{2}} \right)\,, \label{eq:normY_thm1_siso}
\end{align}
where the constant $M$ depends on the initial condition $\theta(0)$.
\end{theorem}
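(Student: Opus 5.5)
The argument runs in four steps: exponential stability of the average system from Lemma~\ref{lema1}, transfer to the original system by averaging for discontinuous systems \cite{P:1979,KPS:2012}, conversion to the bounds \eqref{eq:normTheta_thm1_siso}--\eqref{eq:normY_thm1_siso} through $\theta=\hat\theta+S$, and substitution into the quadratic map \eqref{eq:y_v2}.

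\textbf{Step 1 (average system).} Lemma~\ref{lema1} gives \eqref{eq:ghat_solution} for $\hat G_{\rm av}$. By \eqref{eq:hatGav_event_1_siso}, $\tilde\theta_{\rm av}=(H^\ast)^{-1}\hat G_{\rm av}$, and $H^\ast$ is nonsingular; this can be read off from the $(1,1)$ block of \eqref{lema2:eq2}, which forces $K^\top H^\ast P+PH^\ast K\prec 0$. Hence
\[
\|\tilde\theta_{\rm av}(t)\|\le \kappa\,\|(H^\ast)^{-1}\|\,\|H^\ast\|\,e^{-\eta t}\|\tilde\theta_{\rm av}(0)\| .
\]
Define $\kappa_\theta:=\kappa\,\|(H^\ast)^{-1}\|\,\|H^\ast\|$, which can be bounded uniformly over the polytope $\mathcal H$. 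Between events, $e_{\rm av}$ is bounded by the triggering rule \eqref{eq:tk+1_event_av_siso}: $e_{\rm av}^\top Q_{\rm e}e_{\rm av}\le \hat G_{\rm av}^\top Q_{\rm G}\hat G_{\rm av}$. So $e_{\rm av}$ decays at the same rate, and the full average state $(\hat G_{\rm av},\tilde\theta_{\rm av},e_{\rm av})$ is locally exponentially stable. Time scales need care here: the averaged dynamics in $\bar t$ carry the factor $1/\omega$, so I will state the Lyapunov decay in original time $t=\bar t/\omega$. I would also note that the minimum inter-event time of the average system is positive, because the ratio $\|e_{\rm av}\|/\|\hat G_{\rm av}\|$ grows from zero at a bounded rate. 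This is needed so that the averaged impulsive system is well posed.

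\textbf{Step 2 (averaging transfer).} I will invoke the averaging theorem for discontinuous/impulsive systems \cite{P:1979}, in the form used in \cite{VHPR:2025,ROKT:2026}. An exponentially stable average equilibrium implies that, for $\omega$ large enough and initial conditions small enough, the original $T$-periodic system \eqref{eq:dotX_event_siso} has a unique exponentially stable periodic solution $y^\Pi(\bar t)$ with $\|y^\Pi-0\|=\mathcal O(1/\omega)$. It also gives $\|y(\bar t)-y_{\rm av}(\bar t)\|=\mathcal O(1/\omega)$ uniformly in time. Combining the two yields
\[
\|\tilde\theta(t)\|\le \kappa_\theta e^{-\eta t}\|\tilde\theta(0)\|+\mathcal O(1/\omega).
\]
This is the \textbf{main obstacle}. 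The triggering instants depend on the state, so one must check that the hypotheses of \cite{P:1979} hold: boundedness and Lipschitz continuity of $\mathcal F$ between impulses, and uniform boundedness of the number of impulses per period. Ruling out accumulation (Zeno behavior) is exactly what the dwell-time estimate of Step 1 supplies, transported to the original system via continuity of the trigger function $\Gamma$. I would first argue through the dwell time, and if necessary add a margin on $Q_{\rm e}$ so that the triggering condition stays robust to the $\mathcal O(1/\omega)$ perturbations.

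\textbf{Step 3 (input bound).} Since $\theta-\theta^\ast=\tilde\theta+S(t)$ and $\|S(t)\|\le|a|$, where $a$ collects the amplitudes $a_i$, and since $\tilde\theta(0)=\theta(0)-\theta^\ast-S(0)$ with $S(0)=0$, the bound of Step 2 gives \eqref{eq:normTheta_thm1_siso}.

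\textbf{Step 4 (output bound).} From \eqref{eq:y_v2}, $|y-Q^\ast|\le\frac12\|H^\ast\|\,\|\theta-\theta^\ast\|^2$. Substituting \eqref{eq:normTheta_thm1_siso} and expanding the square gives three terms. The first is $\frac12\|H^\ast\|\kappa_\theta^2e^{-2\eta t}\|\theta(0)-\theta^\ast\|^2\le Me^{-\eta t}$, where $M$ depends on $\theta(0)$. The second is the cross term $2\kappa_\theta e^{-\eta t}\|\theta(0)-\theta^\ast\|\,\mathcal O(a+1/\omega)$, after absorbing $\|H^\ast\|/2$ into the $\mathcal O$. The third is $\mathcal O((a+1/\omega)^2)=\mathcal O(a^2+1/\omega^2)$. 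Together these give \eqref{eq:normY_thm1_siso}.
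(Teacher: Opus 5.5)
Your proposal is correct and follows the paper's route: exponential decay of $\tilde\theta_{\rm av}$ from Lemma~\ref{lema1} via $\hat G_{\rm av}=H^\ast\tilde\theta_{\rm av}$ with $\kappa_\theta=\kappa\|H^{\ast-1}\|\,\|H^\ast\|$, then the $\mathcal O(1/\omega)$ averaging estimate and the triangle inequality, then $\theta-\theta^\ast=\tilde\theta+S$, and finally the expanded bound $|y-Q^\ast|\le\frac12\|H^\ast\|\,\|\theta-\theta^\ast\|^2$. Your additions (nonsingularity of $H^\ast$ from the LMI, $S(0)=0$, the time-scale bookkeeping, and flagging the hypotheses of the discontinuous averaging theorem) only make explicit what the paper asserts, and your periodic-orbit claim in Step~2 is superfluous, because the uniform $\mathcal O(1/\omega)$ closeness alone yields the bound.
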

\begin{IEEEproof}
From \eqref{eq:hatGav_event_1_siso}, since $H^{*}$ is invertible the following bound holds:
\begin{align}
   \|\tilde{\theta}_{\rm av}(\bar{t})\| \leq \|H^{\ast -1}\|\|\hat{G}_{\rm av}(\bar{t})\|\,. \label{eq:theta_bound}
\end{align}
Since $\hat{G}_{\rm av}(0)=H^{\ast}\tilde{\theta}_{\rm av}(0)$, it follows that
$\|\hat{G}_{\rm av}(0)\|\leq\|H^{\ast}\|\,\|\tilde{\theta}_{\rm av}(0)\|$, which together (\ref{eq:ghat_solution}) and \eqref{eq:theta_bound} yields
\begin{align}
    \| \tilde{\theta}_{\rm av}(\bar{t})\| \leq \kappa_{\theta} e^{-\eta t}\|\tilde{\theta}_{\rm av}(0)\|\,, \label{eq:tilde_theta_av_exp}
\end{align}
where $\kappa_{\theta} = \kappa\|H^{\ast -1}\|\|H^{\ast }\|$. Since the differential equation (\ref{eq:dotTildeTheta_2_event_siso}) has a discontinuous right-hand side and is $T$-periodic in $t$, and since the average dynamics $\tilde{\theta}_{\rm av}(\bar{t})$ in (\ref{eq:tilde_theta_av_exp}) are asymptotically stable, invoking the Averaging Theorem \cite{P:1979,KPS:2012} guarantees that
\begin{align}
    \|\tilde{\theta}(t) - \tilde{\theta}_{\rm av}(t)\| \leq \mathcal{O}\left(\frac{1}{\omega}\right)\,.
\end{align}
Applying the triangle inequality yields
\begin{align}
    \|\tilde{\theta}(t)\| \leq \kappa_{\theta} e^{-\eta t}\|\tilde{\theta}_{\rm av}(0)\|+ \mathcal{O}\left(\frac{1}{\omega}\right)\,.
\end{align}
Similarly, the same arguments can be applied to $\hat{G}(t)$, and the Averaging Theorem \cite{P:1979,KPS:2012} ensures that
\begin{align}
    \|\hat{G}(t) - \hat{G}_{\rm av}(t)\| \leq \mathcal{O}\left(\frac{1}{\omega}\right)\,,
\end{align}
by the triangle inequality, it follows that
\begin{align}
    \|\hat{G}(t)\| \leq \kappa e^{-\eta t}\|\hat{G}_{\rm av}(0)\|+ \mathcal{O}\left(\frac{1}{\omega}\right)\,.
\end{align}
Now, from (\ref{eq:theta_event_siso}) and (\ref{eq:thetaTilde}), it follows that
\begin{align}
    \theta(t) - \theta^{\ast} = \tilde{\theta}(t) + S(t)\,.
\end{align}
By applying the Euclidean norm,
\begin{align}
     \|\theta(t) - \theta^{\ast}\| &= \|\tilde{\theta}(t) + S(t)\| \leq \|\tilde{\theta}(t)\| + \| S(t)\|\,,\nonumber \\
     \|\theta(t) - \theta^{\ast}\| & \leq \kappa_{\theta} e^{-\eta t} \|\theta(0) - \theta^{\ast}\| + \mathcal{O}\left(a + \frac{1}{\omega}\right)\,.
     \label{eq:norm_tilde_exp}
\end{align}
For the output, the estimation error is defined as
\begin{align}
    \tilde{y}(t):= y(t) - Q^{\ast}\,,
\end{align}
by taking its norm and using the Cauchy--Schwarz inequality, one obtains
\begin{align}
    |y(t) - Q^{\ast}| {=} 
    \leq \frac{1}{2}\|H^{\ast}\| \|\theta(t) - \theta^{\ast}\| ^{2}.\label{eq:y_tilde}
\end{align}
Using (\ref{eq:norm_tilde_exp}), (\ref{eq:y_tilde}) can be rewritten as
\begin{align}
    |y(t) - Q^{\ast}| &\leq\frac{1}{2} \|H^{\ast}\| \Big[ \kappa_{\theta}^{2} e^{-2\eta t}\|\theta(0) {-} \theta^{\ast}\|^{2}\nonumber \\ 
     & + 2\kappa_{\theta} e^{-\eta t} \|\theta(0) - \theta^{\ast}\|\mathcal{O}\left(a + \frac{1}{\omega}\right) \nonumber \\
    &+ \mathcal{O}\left(a^{2} + \frac{2a}{\omega} + \frac{1}{\omega^{2}}\right) \Bigg]\,,
\end{align}
Since $e^{-2\eta t} \leq e^{-\eta t}$ for $\eta > 0$, and using Young's inequality we have $\frac{2a}{\omega} \leq a^{2} + \frac{1}{\omega^{2}}$, it follows that
\begin{align}
   |y(t) - Q^{\ast}| &{\leq} Me^{-\eta t} {+}  \mathcal{O}\left(a^{2} {+} \frac{1}{\omega^{2}} \right) \nonumber \\
   &{+} 2\kappa_{\theta} e^{-\eta t} \|\theta(0) {-} \theta^{\ast}\|\mathcal{O}\left(a {+} \frac{1}{\omega}\right),
\end{align}
where
$M = \frac{1}{2}\|H^{\ast}\|\kappa_{\theta}^{2}\|\theta(0) - \theta^{\ast}\|^{2}$.
Therefore, \eqref{eq:normTheta_thm1_siso} and \eqref{eq:normY_thm1_siso} are obtained, completing the proof. 
\hfill $\blacksquare$
\end{IEEEproof}

\begin{proposition} \label{propzeno}
For $\omega > 0$ sufficiently large, the inter-event interval satisfies $t_{k+1} - t_{k} > \tau^*$, for all $k \in \mathbb{N}$, thus excluding Zeno behavior, with
\begin{align}
\tau^\ast
=
\frac{
1-\mathcal O\!\left(\frac{1}{\omega}\right)
}{
\|H^{\ast}K\|
\beta^2
\left(
1+\beta^{-1}-\mathcal O\!\left(\frac{1}{\omega}\right)
\right)
}\,, \quad \beta>0.\label{eq:miet}
\end{align}
\end{proposition}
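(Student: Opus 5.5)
The plan follows the standard minimum inter-event time argument for Tabuada-type relative thresholds, applied to the average system and then transferred to the original system with $\mathcal{O}(1/\omega)$ corrections. Take the ratio
\begin{align*}
\phi(t)=\frac{\|e(t)\|}{\|\hat G(t)\|}.
\end{align*}
After each event $e(t_k)=0$, so $\phi(t_k)=0$. By the ETM \eqref{eq:tk+1_event}--\eqref{eq:gamma}, no event occurs while $e^\top Q_{\rm e}e\le \hat G^\top Q_{\rm G}\hat G$. This inequality is implied by
\begin{align*}
\phi\le \beta^{-1}, \qquad \beta:=\sqrt{\lambda_{\max}(Q_{\rm e})/\lambda_{\min}(Q_{\rm G})}>0.
\end{align*}
Hence the inter-event time is bounded below by the time $\phi$ needs to travel from $0$ to $\beta^{-1}$. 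Lower-bounding that time is the whole task.

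The first step is to differentiate $\phi$ on $[t_k,t_{k+1})$, working on the scale $t$ rather than $\bar t$. Since $\dot e=-\dot{\hat G}$, we have
\begin{align*}
\dot\phi\le \frac{\|\dot{\hat G}\|}{\|\hat G\|}\,(1+\phi).
\end{align*}
From \eqref{eq:dotHatGav_event_3_siso},
\begin{align*}
\dot{\hat G}=(H^\ast+\Delta\mathscr H^\ast)K(\hat G+e)+\tfrac{d\Delta\mathscr H^\ast}{dt}\tilde\theta+\tfrac{d\Delta}{dt}.
\end{align*}
Here I would invoke the averaging results already used in Theorem~\ref{thm:NETESC_1_siso}, namely $\hat G=\hat G_{\rm av}+\mathcal O(1/\omega)$, together with $\tilde\theta_{\rm av}=H^{\ast-1}\hat G_{\rm av}$. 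The aim is to replace the oscillatory contributions by the average vector field $H^\ast K(\hat G_{\rm av}+e_{\rm av})$, with relative perturbations of order $\mathcal O(1/\omega)$. This gives a scalar comparison inequality of the form
\begin{align*}
\dot\phi\le \|H^\ast K\|\bigl(1+\mathcal O(\tfrac1\omega)\bigr)(1+\phi)^2 .
\end{align*}

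The second step is to integrate this Riccati-type comparison inequality from $\phi(t_k)=0$ up to $\phi=\beta^{-1}$. This yields
\begin{align*}
\tau\ge\frac{1}{\|H^\ast K\|(1+\mathcal O(1/\omega))}\left(1-\frac{1}{1+\beta^{-1}}\right)=\frac{\beta^{-1}}{\|H^\ast K\|(1+\beta^{-1})(1+\mathcal O(1/\omega))}.
\end{align*}
I would then rewrite the $\mathcal O(1/\omega)$ factors so that they appear in the numerator and the denominator as in \eqref{eq:miet}. The expression there, with $\beta^2(1+\beta^{-1}-\mathcal O(1/\omega))$, suggests the authors used a cruder, non-sharp bound. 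The exact form of $\tau^\ast$ is therefore a bookkeeping matter. What matters is that for $\omega$ large enough the numerator is positive and the denominator is finite and positive, so that $\tau^\ast>0$ uniformly in $k$. Because this bound is uniform in $k$, Zeno behavior is excluded.

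The main obstacle is the first step: making the $\mathcal O(1/\omega)$ perturbations \emph{relative} to $\|\hat G\|$. The terms $\tfrac{d\Delta}{dt}$ and $\tfrac{d\Delta\mathscr H^\ast}{dt}\tilde\theta$ do not vanish as $\hat G\to0$, and $\tfrac{d\Delta}{dt}$ is in fact of order $\omega$, so $\|\dot{\hat G}\|/\|\hat G\|$ is not globally bounded. My first attempt is to perform the whole argument on the average system in $\bar t$, where $\dot\phi_{\rm av}\le\|H^\ast K\|(1+\phi_{\rm av})^2$ holds exactly. The averaging theorem of \cite{P:1979,KPS:2012}, applied on each bounded interval, then transfers the lower bound on the average inter-event time to the true triggering times up to an $\mathcal O(1/\omega)$ deviation. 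If that does not close, I would restrict the analysis to times at which $\|\hat G\|$ dominates the $\mathcal O(1/\omega)$ residual, which gives practical Zeno-freeness.
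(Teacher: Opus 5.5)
Your route is essentially the paper's. The paper also works on the average system with $\phi_{\rm av}=\beta\|e_{\rm av}\|/\|\hat G_{\rm av}\|$; this is your $\phi$ scaled by $\beta$, so your threshold $\beta^{-1}$ becomes its threshold $1$. It derives a Riccati-type comparison inequality, solves it with the comparison lemma, and transfers the result to the original system by asserting $|\phi-\phi_{\rm av}|\le\mathcal O(1/\omega)$ from the averaging theorem. Your ``first attempt'' at the transfer is therefore exactly the paper's argument. The objection you raise (vanishing $\|\hat G\|$, $\tfrac{d\Delta}{dt}=\mathcal O(\omega)$) is not resolved there either. You should drop your first-step claim $\dot\phi\le\|H^\ast K\|(1+\mathcal O(1/\omega))(1+\phi)^2$ for the original system, since, as you say yourself, it fails near $\hat G=0$; the paper never uses it.

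The real gap is calling the form of $\tau^\ast$ ``bookkeeping.'' Your computation, which is correct, gives the leading-order bound $1/(\|H^\ast K\|(1+\beta))$, whereas \eqref{eq:miet} has leading term $1/(\|H^\ast K\|\beta(1+\beta))$. The mismatch comes from slips in the paper's intermediate steps. With $\phi_{\rm av}=\beta\|e_{\rm av}\|/\|\hat G_{\rm av}\|$ the correct inequality is $\tfrac{d\phi_{\rm av}}{d\bar t}\le\tfrac{\|H^\ast K\|}{\omega}\beta^{-1}(\beta+\phi_{\rm av})^2$. In addition, the displayed $\tilde\phi_{\rm av}$ does not solve the paper's own comparison equation: the factor $\beta^2$ multiplying $\bar t$ should not be there.

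Passing from your bound to \eqref{eq:miet} requires $\beta\ge 1$, and this does not follow from the definition of $\beta$ alone. For $\beta<1$ the stated $\tau^\ast$ would exceed your bound, and your bound is attained. For a scalar average system with $H^\ast K=-L<0$, $\hat G_{\rm av}$ decreases linearly between events, and the ratio reaches $\beta^{-1}$ after exactly $1/(L(1+\beta))$.

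The missing line comes from Lemma~\ref{lema1}. Evaluate the quadratic form of \eqref{lema2:eq2} at $e_{\rm av}=-\hat G_{\rm av}$. The cross terms cancel and leave $\hat G_{\rm av}^\top(2\eta P+Q_{\rm G}-Q_{\rm e})\hat G_{\rm av}<0$, i.e.\ $Q_{\rm e}\succ Q_{\rm G}+2\eta P\succ Q_{\rm G}$, hence $\beta>1$. With that, your bound strictly dominates \eqref{eq:miet} for $\omega$ large, and the proposition, with the paper's weaker constant, follows.
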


\begin{IEEEproof}
From the average closed-loop system, particularly the dynamics of
$\hat{G}_{\rm av}(\bar{t})$ in
\eqref{eq:dotHatGav_event_1_siso} and the triggering condition
$\Gamma(\hat{G}_{\rm av},e_{\rm av}) < 0$ in
\eqref{eq:tk+1_event_av_siso}, it follows that
\begin{align}
e_{\rm av}^{\top}Q_{\rm e}e_{\rm av}
>
\hat G_{\rm av}^{\top}Q_{\rm G}\hat G_{\rm av}.
\end{align}
Following \cite{G:2014}, define
\begin{align}
\phi_{\rm av}(\bar t)
=
\beta
\frac{\|e_{\rm av}(\bar t)\|}
{\|\hat G_{\rm av}(\bar t)\|},
\qquad
\beta
=
\sqrt{\frac{\lambda_{\max}(Q_{\rm e})}
{\lambda_{\min}(Q_{\rm G})}}. \label{eq:beta}
\end{align}
A lower bound on the inter-execution time is given by the time required for
$\phi_{\rm av}$ to evolve from $0$ to $1$. Using
\eqref{eq:dotHatGav_event_1_siso} and \eqref{eq:Eav_event_1_siso}, 
straightforward 
calculations yield
\begin{align}
\frac{d\phi_{\rm av}}{d\bar t}
\leq
\frac{
\|H^{\ast}K\|}{\omega}
\beta
\left(
\frac1\beta+\phi_{\rm av}
\right)^2 .
\end{align}
According to the Comparison Lemma \cite{K:2002}, an upper bound $\tilde{\phi}_{\rm av}(\bar{t})$ for $\phi_{\rm av}(\bar{t})$ is given by
\begin{align}
\phi_{\rm av}(\bar{t}) \le \tilde{\phi}_{\rm av}(\bar{t}), 
\qquad 
\phi_{\rm av}(0)=\tilde{\phi}_{\rm av}(0)=0,
\end{align}
an upper bound $\tilde{\phi}_{\rm av}(\bar{t})$ for $\phi_{\rm av}(\bar{t})$ is obtained as the solution to
\begin{align}
\tilde\phi_{\rm av}(\bar t)
=
\frac{\beta^{-1}}
{1-\frac{\|H^{\ast}K\|}{\omega}\beta^2\bar t}
-\frac1\beta .
\end{align}
Since $\phi_{\rm av}$ is the average counterpart of
$\phi(t)=\beta\|e(t)\|/\|\hat G(t)\|$, the Averaging Theorem
\cite{P:1979,KPS:2012} implies
\begin{align}
|\phi(t)-\phi_{\rm av}(t)|
\leq
\mathcal O\!\left(\frac1\omega\right).
\end{align}
Hence, by the triangle inequality,
\begin{align}
\phi(t)
\le
\tilde\phi_{\rm av}(t)
+
\mathcal O\!\left(\frac{1}{\omega}\right).
\end{align}
Therefore, the original system admits the positive inter-execution-time
lower bound given in~\eqref{eq:miet}, which excludes Zeno behavior since $1{+}\beta^{-1}{-}\mathcal O\!\left(\frac{1}{\omega}\right) > 0$ for sufficiently large $\omega$.
\hfill $\blacksquare$
\end{IEEEproof}

\subsection{Optimization Problem} \label{optproblem}

From the minimum inter-event time $\tau^*$ established in~\eqref{eq:miet}, it can be noticed that, for $\omega$ sufficiently large, the value of $\tau^*$ can be increased by minimizing $\beta = \sqrt{{\lambda_{\max}(Q_{\rm e})}/
{\lambda_{\min}(Q_{\rm G})}}$ as established in~\eqref{eq:beta}. This can be accomplished by minimizing the eigenvalues of $Q_{\rm e}$ and maximizing the eigenvalues of $Q_{\rm G}$. Thus, similar to~\cite{CP:2022}, the following convex optimization problem is provided to solve the co-design condition established in Lemma~\ref{lema1} by maximizing the minimum inter-event time and, consequently, reducing the number of transmissions:
\begin{align}
\min_{W,Z,Q, \tilde{Q}_{\rm e}, \tilde{Q}_{\rm G}}
\quad & \mathrm{tr}(\tilde Q_{{e}} + \tilde Q_{{G}} + Q)
\label{eq:min_tr} \\
\text{subject to} \quad &
\begin{bmatrix}
- Q & I \\
I & -W
\end{bmatrix}
\prec 0\,, \label{eq:min_matriz} \\
&\text{and the LMI \eqref{lema2:condition}}. \nonumber
\end{align}
where $Q \succ 0$, $W \succ 0,\tilde{Q}_{\rm G} \succ 0$, $\tilde{Q}_{\rm e}\succ 0$, and $Z$, are all decision variables. The optimization problem in (\ref{eq:min_tr}) minimizes the traces of $Q$, $\tilde{Q}_{\rm e}$, and $\tilde{Q}_{\rm G}$, which tends to reduce their eigenvalues. 
By applying the Schur complement to (\ref{eq:min_matriz}), one obtains $W^{-1} \prec Q$. Hence, minimizing $\mathrm{tr}(Q)$ tends to minimize the eigenvalues of $W^{-1}=P \succ 0$. Moreover, by minimizing the eigenvalues of $\tilde{Q}_{\rm G}$, it maximizes the eigenvalues of $Q_{\rm G} = \tilde{Q}_{\rm G}^{-1}$. Also, minimizing the eigenvalues of $Q$, $\tilde{Q}_{\rm e}$, minimizes the eigenvalues of $Q_e = W^{-1} \tilde{Q}_e W^{-1}$, thereby minimizing $\beta = \sqrt{{\lambda_{\max}(Q_{\rm e})}/
{\lambda_{\min}(Q_{\rm G})}}$, increasing the inter-event intervals, and, consequently, reducing the number of transmissions. 

\section{Simulation Results} \label{sec:sim_siso}

We consider the nonlinear map defined in (\ref{eq:y_v2}) with a two-dimensional input $\theta(t) \in \mathbb{R}^{2}$ and a scalar output $y(t) \in \mathbb{R}$, with an unknown Hessian matrix whose values belong to the following polytopic set defined by the vertices: $H_{1} \!=\! (1 \!-\! \bar{\sigma})H_{0}$ and $H_{2} \!=\! (1 \!+\! \bar{\sigma})H_{0}$, where $\bar{\sigma} > 0$ is the uncertainty with respect to a nominal Hessian matrix $H_0$ given by \cite{GKN:2012}:
\begin{align}
H_{0} =
\begin{bmatrix}
100 & 30 \\
30 & 20
\end{bmatrix} \succ 0.
\end{align}
The unknown mapping is characterized by $Q^\ast = 100$ and $\theta^\ast = [2 ~4]^{\top}$. The simulations were performed with $\bar{\sigma}=0.6$, using the vertices $H_{1}$ and $H_{2}$, perturbation frequencies $\omega_{1}=1$ rad/s and $\omega_{2}=7$ rad/s, amplitudes $a_{1}=a_{2}=0.1$, and the initial condition $\hat{\theta}(0)=[2.5~5]^{\top}$. These parameters were selected based on \cite{ROKT:2026}.

Initially, we evaluate how the knowledge of curvature influences the number of transmissions. For that purpose, we fixed the decay rate $\eta = 1$ and solved the optimization problem~\eqref{eq:min_tr} for 10 values of $\bar{\sigma}$ linearly spaced in the interval from $0$ to $0.99$. Note that smaller values of $\bar{\sigma}$ indicate more knowledge about the Hessian matrix (curvature). To evaluate the advantages of using a full gain matrix, we repeated this experiment using a full gain matrix and a diagonal gain matrix. The co-design with a diagonal structure is solved according to Remark~\ref{rem:diag}. The results are depicted in Fig.~\ref{fig:3}, where the optimal values of the objective function of~\eqref{eq:min_tr} obtained with the full and diagonal gain matrices are shown. In both cases, for low uncertainty levels, the cost is small and increases monotonically as $\bar{\sigma}$ grows. It clearly indicates that more curvature knowledge results in fewer transmissions. Recall that smaller values of $\mathrm{tr}(\tilde{Q}_e+\tilde{Q}_G+Q)$ indicate greater reductions of the number of transmissions. 
\begin{figure}[!ht]
	\centering
    \subfigure{\includegraphics[width=0.8\columnwidth]{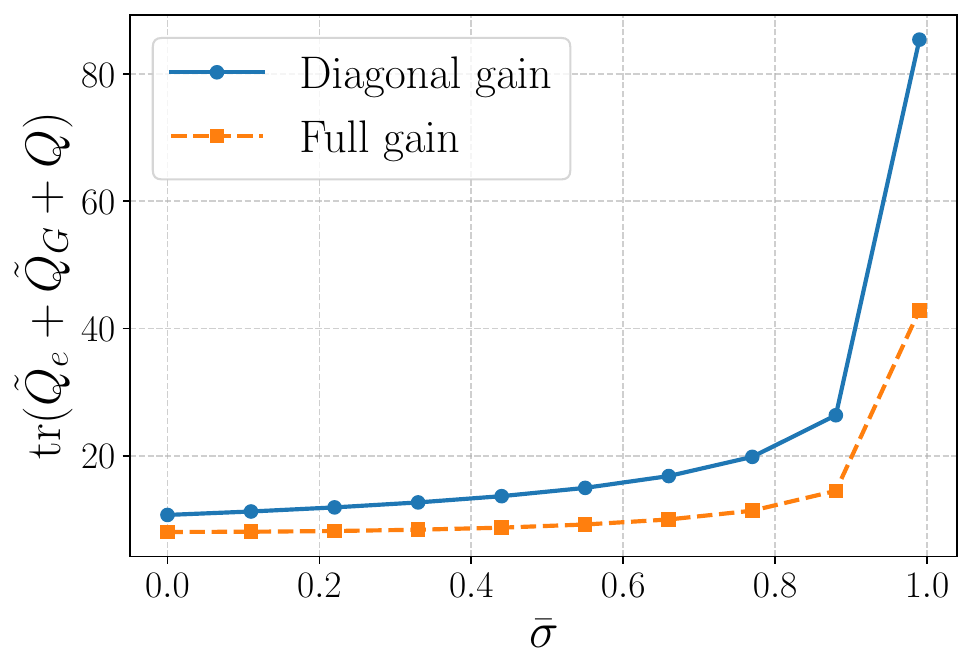}}
    \caption{Cost function of the optimization problem~\eqref{eq:min_tr} solved for different values of $\bar{\sigma}$, considering diagonal and full gain structures.}
    \label{fig:3}
\end{figure}

Moreover, for all values of $\bar{\sigma}$, the co-design condition solved with a full gain matrix leads to smaller values of $\mathrm{tr}(\tilde{Q}_e+\tilde{Q}_G+Q)$ than using a diagonal structure.  Therefore, within the proposed co-design framework, no diagonal gain can simultaneously satisfy the decay rate and the same triggering threshold achieved by the corresponding full-matrix gain. Since both the diagonal and full gain designs are synthesized within the same co-design framework, the performance gap observed in Fig.~\ref{fig:3} is related to the use of a full gain structure. The superior performance of the full gain matrix is primarily related to the extra degrees of freedom to solve the optimization problem provided by the extra decision variables in the full gain structure. 

By fixing the decay rate $\eta = 1$, the Hessian uncertainty bound $\bar\sigma = 0.6$, and setting a triggering-threshold bound $\bar{J} = 10$, such that $\mathrm{tr}(\tilde{Q}_e+\tilde{Q}_G+Q) \leq \bar{J}$, and solving~\eqref{eq:min_tr} {with the full gain matrix, we obtain
\begin{align*}
    K = 
    \begin{bmatrix}
        -0.0795  &  0.1193 \\
        0.1193  & -0.3977
    \end{bmatrix}, 
\end{align*}
$Q_G = 1.8974 I_2$, and $Q_e = 10.3763 I_2$, with the $\mathrm{tr}(\tilde{Q}_e+\tilde{Q}_G+Q) = 9.4889$.}
In this setting, only the co-design with a full gain is \textit{feasible}, meaning that a diagonal gain cannot realize the same decay rate with the same triggering threshold. The simulation results for the closed-loop system with the co-designed event-triggered ES control scheme with full control gain structure are shown in Fig.~\ref{fig:simulation}. {In particular, Fig. \ref{fig:simulation}(a) and Fig. \ref{fig:simulation}(b) show the evolution of the optimization variables, where both states converge to their optimal values for the Hessian vertices $H_{1}$ and $H_{2}$, respectively. Fig. \ref{fig:simulation}(c) and Fig. \ref{fig:simulation}(d) illustrate the corresponding output trajectories, confirming convergence to the optimal objective value $Q^{*}$. The control inputs in Fig. \ref{fig:simulation}(e) and Fig. \ref{fig:simulation}(f) approach zero as the optimum is reached. Finally, Fig. \ref{fig:simulation}(g) and Fig \ref{fig:simulation}(h) present the event-triggering instants. The simulation with the vertex $H_{2}$, which represents the Hessian vertex associated with the upper uncertainty bound, requires more communication updates than $H_{1}$ (lower bound), 104 updates versus 38 updates, while still preserving the prescribed convergence rate.} This illustrates that the synthesized gain behaves differently between vertices. However, the convergence is theoretically ensured for any $H^\ast \in \mathcal{H}$.
\begin{figure}[!ht]
    \vspace{-0.35cm}
    \hspace{-0.95cm}\includegraphics[width = 1.2\columnwidth]{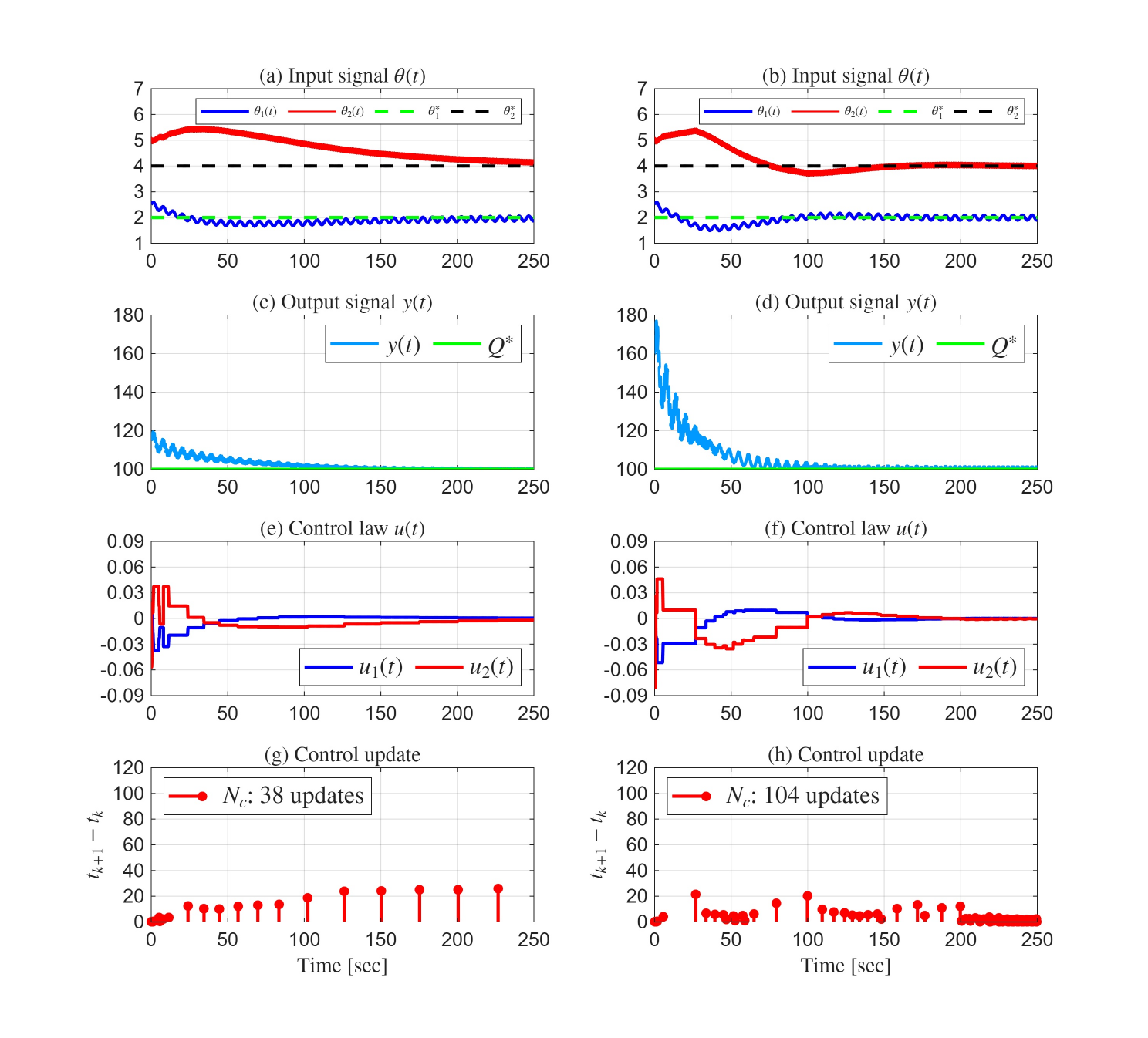}
    \vspace{-1.0cm}
    \caption{Simulation results of the event-triggered ES using a full-matrix gain for the vertex $H_{1}$ (left column) and for the vertex $H_{2}$ (right column).}
    \label{fig:simulation}
\end{figure}

\section{Conclusion} \label{sec:concl_siso}

This paper presented a co-design framework for an event-triggered ES controller, jointly synthesizing a stabilizing feedback gain and a triggering law.  The proposed co-design strategy exploits prior Hessian information to guarantee exponential stability of the system with a given decay rate while reducing the communication load. Using averaging theory and Lyapunov analysis, local stability around the optimum point was proved. Moreover, we showed that the closed-loop solutions do not present Zeno behavior. By comparing diagonal and full gain designs within the co-design framework, we isolated the contribution of off-diagonal elements. The numerical results illustrated that the off-diagonal structure allows for achieving superior triggering performance, as diagonal gains fail to achieve the same decay rate and triggering threshold even with the same co-design methodology.

The most significant practical implication of Fig.~\ref{fig:3} is that increasing the uncertainty polytope leads to a higher trigger count. These results quantitatively demonstrate that more accurate prior curvature information can be directly translated into reduced communication through joint controller-trigger synthesis.









\bibliographystyle{IEEEtranS} 
\bibliography{references}

@article{APDNH:2018,
  author = {M. Abdelrahim and R. Postoyan and J. Daafouz and D. Nešić and M. Heemels},
  title = {Co-design of output feedback laws and event-triggering conditions for the {$\mathcal{L}_2$}-stabilization of linear systems},
  journal = {Automatica},
  volume = {87},
  pages = {337--344},
  year = {2018}
}

@book{AK:2003,
  author = {K. B. Ariyur and M. Krstić},
  title = {Real-Time Optimization by Extremum-Seeking Control},
  address = {Canada},
  publisher = {Wiley},
  year = {2003}
}

@inproceedings{AB:1999,
  author = {K. J. {\r A}str{\"o}m and B. P. Bernhardsson},
  title = {Comparison of periodic and event based sampling for first-order stochastic systems},
  booktitle = {IFAC World Congress},
  volume = {32},
  pages = {5006--5011},
  year = {1999}
}

@article{CP:2022,
  author = {P. H. S. Coutinho and R. M. Palhares},
  title = {Codesign of dynamic event-triggered gain-scheduling control for a class of nonlinear systems},
  journal = {IEEE Trans. Autom. Control},
  volume = {67},
  number = {8},
  pages = {4186--4193},
  year = {2022}
}

@article{CPBPP:2026,
  author = {P. H. S. Coutinho and P. S. P. Pessim and I. Bessa and M. L. C. Peixoto and R. M. Palhares},
  title = {Handling asynchronous scheduling functions in periodic event-triggered gain-scheduled control with guaranteed polytopic inclusion},
  journal = {IEEE Transactions on Cybernetics},
  volume = {56},
  number = {8},
  pages = {4206--4218},
  year = {2026}
}

@article{GKN:2012,
  author = {A. Ghaffari and M. Krstić and D. Nešić},
  title = {Multivariable {Newton}-based extremum seeking},
  journal = {Automatica},
  volume = {48},
  pages = {1759--1767},
  year = {2012}
}

@article{G:2014,
  author = {A. Girard},
  title = {Dynamic triggering mechanism for event-triggered control},
  journal = {IEEE Trans. Autom. Control},
  volume = {60},
  pages = {1992--1997},
  year = {2014}
}

@article{GUAY:2021,
  author = {M. Guay},
  title = {Uncertainty estimation in extremum seeking control of unknown static maps},
  journal = {IEEE Control Systems Letters},
  volume = {5},
  number = {4},
  pages = {1115--1120},
  year = {2021}
}

@book{K:2002,
  author = {H. K. Khalil},
  title = {Nonlinear Systems},
  address = {Upper Saddle River, NJ, USA},
  publisher = {Prentice Hall},
  year = {2002}
}

@article{KW:2000,
  author = {M. Krstić and H. H. Wang},
  title = {Stability of extremum seeking feedback for general nonlinear dynamic systems},
  journal = {Automatica},
  volume = {36},
  pages = {595--601},
  year = {2000}
}

@book{TRoux:2022,
  author = {T. R. Oliveira and M. Krstić},
  title = {Extremum Seeking through Delays and {PDE}s},
  address = {Philadelphia, PA, USA},
  publisher = {Society for Industrial and Applied Mathematics (SIAM)},
  year = {2022}
}

@article{CSm:2026,
  author = {T. R. Oliveira and M. Krstić and T. Basar},
  title = {Extremum and Nash equilibrium seeking with delays and {PDE}s: Designs \& applications into the second century of extremum-seeking control},
  journal = {IEEE Control Systems},
  volume = {46},
  number = {2},
  pages = {39--87},
  year = {2026}
}

@article{OTK:2017,
  author = {T. R. Oliveira and M. Krstić and D. Tsubakino},
  title = {Extremum seeking for static maps with delays},
  journal = {IEEE Trans. Autom. Control},
  volume = {62},
  number = {4},
  pages = {1911--1926},
  year = {2017}
}

@article{POPF:2020,
  author = {P. Paz and T. R. Oliveira and A. V. Pino and A. P. Fontana},
  title = {Model-free neuromuscular electrical stimulation by stochastic extremum seeking},
  journal = {IEEE Transactions on Control Systems Technology},
  volume = {28},
  number = {1},
  pages = {238--253},
  year = {2020}
}

@article{P:1979,
  author = {V. A. Plotnikov},
  title = {Averaging of differential inclusions},
  journal = {Ukrainian Mathematical Journal},
  volume = {31},
  pages = {454--457},
  year = {1979}
}

@article{VHPR:2025,
  author = {V. H. P. Rodrigues and T. R. Oliveira and L. Hsu and M. Diagne and M. Krstić},
  title = {Event-triggered and periodic event-triggered extremum seeking control},
  journal = {Automatica},
  volume = {174},
  number = {Art. no. 112161},
  year = {2025}
}

@article{ROKT:2026,
  author = {V. H. P. Rodrigues and T. R. Oliveira and M. Krstic and P. Tabuada},
  title = {Event-triggered {Newton} extremum seeking for multivariable optimization},
  year = {2026},
  note = {Available: \url{https://arxiv.org/abs/2601.14416}}
}

@article{SHPMG:2023,
  author = {P. K. Shahri and B. HomChaudhuri and S. S. Pulugurtha and A. Mesbah and A. H. Ghasemi},
  title = {Traffic congestion control using distributed extremum seeking and filtered feedback linearization control approaches},
  journal = {IEEE Control Systems Letters},
  volume = {7},
  pages = {1003--1008},
  year = {2023}
}

@article{AS:2024,
  author = {A. Scheinker},
  title = {100 years of extremum seeking: A survey},
  journal = {Automatica},
  volume = {161},
  number = {Art. no. 111481},
  year = {2024}
}

@article{T:2007,
  author = {P. Tabuada},
  title = {Event-triggered real-time scheduling of stabilizing control tasks},
  journal = {IEEE Trans. Autom. Control},
  volume = {52},
  pages = {1680--1685},
  year = {2007}
}

@article{ZHGDDYP:2020,
  author = {X. M. Zhang and Q. L. Han and X. Ge and D. Ding and L. Ding and D. Yue and C. Peng},
  title = {Networked control systems: A survey of trends and techniques},
  journal = {IEEE/CAA Journal of Automatica Sinica},
  volume = {7},
  number = {1},
  pages = {1--17},
  year = {2020}
}

@article{ZFO:2023,
  author = {Y. Zhu and E. Fridman and T. R. Oliveira},
  title = {Sampled-data extremum seeking with constant delay: A time-delay approach},
  journal = {IEEE Trans. Autom. Control},
  volume = {68},
  number = {1},
  pages = {432--439},
  year = {2023}
}

@article{KPS:2012,
title = {Overview of {V.A. Plotnikov’s} research on averaging of differential inclusions},
journal = {Physica D: Nonlinear Phenomena},
volume = {241},
number = {22},
pages = {1932-1947},
year = {2012},
note = {Dynamics and Bifurcations of Nonsmooth Systems},
issn = {0167-2789},
author = {S. Klymchuk and A. Plotnikov and N. Skripnik},
}

\end{document}